\documentclass{article}

\usepackage[latin1]{inputenc}  

\usepackage{algorithm}
\usepackage{algorithmic}


\usepackage{amsmath,amssymb,dcolumn,amsthm}
\setcounter{MaxMatrixCols}{12}


\usepackage{graphicx,color}

\usepackage{tikz}
\usetikzlibrary{arrows}
\tikzstyle{block} = [draw, draw=black, line width = 1pt, rectangle,
    rounded corners=4pt,
    minimum height=3em, text width=.7\columnwidth,
    inner sep = 10pt]

\usepackage{pgfplots} 
\pgfplotsset{compat=newest} 
\pgfplotsset{plot coordinates/math parser=false} 
\newlength\figureheight 
\newlength\figurewidth
  
\usepackage{caption}
\usepackage{subcaption}


\bibliographystyle{IEEEtran}


\usepackage{ian_misc,ian_opt,ian_math,ian_IEEEarticle,ian_abbrvs,ian_ricupd}



\begin{document}


\title{ Low-Rank Modifications of Riccati Factorizations for Model Predictive Control}

\author{Isak Nielsen and Daniel Axehill%
  \thanks{I. Nielsen and D. Axehill are with the Division of Automatic
    Control, Linköping University, SE-58183 Linköping, Sweden, \texttt{isak.nielsen@liu.se, daniel@isy.liu.se}.}}



\maketitle

\begin{abstract}
In Model Predictive Control (\MPC) the control input is computed by solving a constrained finite-time optimal control (\cftoc) problem at each sample in the control loop. The main computational effort is often spent on computing the search directions, which in \MPC corresponds to solving unconstrained finite-time optimal control (\uftoc) problems. This is commonly performed using Riccati recursions or generic sparsity exploiting algorithms. In this work the focus is efficient search direction computations for active-set (\AS) type methods. The system of equations to be solved at each \AS iteration is changed only by a low-rank modification of the previous one, and exploiting this structured change is important for the performance of \AS type solvers. In this paper, theory for how to exploit these low-rank changes by modifying the Riccati factorization between \AS iterations in a structured way is presented. A numerical evaluation of the proposed algorithm shows that the computation time can be significantly reduced by modifying, instead of re-computing, the Riccati factorization. This speed-up can be important for \AS type solvers used for linear, nonlinear and hybrid \MPC.

\end{abstract}

%


\section{Introduction}

Model Predictive Control (\MPC) is a control strategy where the applied control input is computed by minimizing a cost function while satisfying constraints on the states and control inputs. It has become one of the most widely used advanced control strategies in industry, and some important reasons for its popularity are that it can handle  multivariable systems and constraints on states and control inputs in a structured way,~\cite{maciejowski2002predictive}. In each sample of the \MPC control loop a constrained finite-time optimal control (\cftoc) problem is solved on-line, which creates a need for efficient optimization algorithms. Note that similar linear algebra is also useful in off-line applications such as explicit \MPC solvers. The \MPC problem and the corresponding \cftoc problem can be of different types depending on which system and problem formulation that is used. Some common variants are linear \MPC, nonlinear \MPC and hybrid \MPC. In many cases the main computational effort when solving the \cftoc{} problem boils down to compute the search directions, which corresponds to solving unconstrained finite-time optimal control (\uftoc) problems. The \uftoc{} problems can be solved using for example Riccati recursions, and some examples of how optimization routines have been sped up by using Riccati recursions are~\cite{jonson83:thesis,rao98:_applic_inter_point_method_model_predic_contr,hansson00:_primal_dual_inter_point_method,
vandenberghe02:_robus_full,axehill06:_mixed_integ_dual_quadr_progr_tail_mpc,axevanhan07:_relax_applic_mipc_compa_and_eff_compu,axehill08:thesis,
axehill08:_dual_gradien_projec_quadr_progr,diehl09:_nonlin_model_predic_contr,nielsen13low_rank_updates,nielsen15:parallel_factorization_cdc,frison16:thesis}.

The use of Riccati recursions in active-set (\AS) methods for optimal control can be found as early as in~\cite{jonson83:thesis}. In this reference a Riccati recursion is used to factor the major block of the \KKT matrix, and for the other block standard low-rank modifications of factorizations are used on a dense system of equations of the size of the number of active inequality constraints. The computational complexity of this algorithm grows quadratically in the number of active inequality constraints.  An alternative sparse non-Riccati factorization is used in~\cite{kirches2011factorization}, and the factorization is updated after changes in the \AS iterations.
 
 In \AS methods it is often crucial to modify the factorization of the \KKT matrix instead of re-factorizing it between \AS iterations,~\cite{nocedal06:num_opt}. Since this has traditionally not been considered possible when using the Riccati factorization, it has sometimes been argued that this factorization is not suitable for \AS methods. However, in~\cite{nielsen13low_rank_updates} a method for making low-rank modifications of the Riccati factorization by exploiting the structured changes between \AS iterations was introduced, showing that this is indeed possible. The work in~\cite{nielsen13low_rank_updates} is limited to problems with non-singular control input weight matrices and simple control input bounds, and modifications of the \KKT matrix is only possible at a single time index.
 
%


The main contribution in this paper is the extension of the theory in~\cite{nielsen13low_rank_updates} to handle more general forms of \uftoc problems, where the \KKT matrix can be singular. The derivation of this result looks similar to the one in~\cite{nielsen13low_rank_updates}, but here more technical depth is added since additional mathematical tools are needed in this paper to cope with the singularity of the \KKT matrix. In this paper it is also described how to modify the factorization for more general modifications of the \KKT matrix where constraints are simultaneously added (or removed) at different time indices. Both these generalizations can  be important when using for example dual projection \AS solvers like the one in~\cite{axehill08:thesis}. Furthermore, in~\cite{nielsen13low_rank_updates} only bound constraints on the control inputs are considered in the \cftoc problem, whereas it will be shown in this paper how the theory can be applied to problems with both state and control input constraints. A more detailed description of the material presented in this paper can be found in the thesis in~\cite{nielsen15:licthesis}.



In this article, $\posdefmats^n$ ($\possemidefmats^n$) denotes
symmetric positive (semi) definite matrices with $n$ columns, $\intset{i}{j} = \braces{z \in \mathbb{Z} \; | \; i \leq z \leq j}$, and $\range{A}$ denotes the range space of a matrix $A$.

Section~\ref{sec:upd:prob} introduces the \cftoc problem and Section~\ref{sec:upd:opt} optimization preliminaries. In Section~\ref{sec:upd:lr_modifications} the main result is presented, and in Section~\ref{sec:upd:general_constr} it is shown how to use this for more general problems. The numerical results and conclusion are presented in Section~\ref{sec:upd:numerical_results} and Section~\ref{sec:upd:conclusions}, respectively.


\section{Problem Formulation}
\label{sec:upd:prob}
For linear \MPC problems, the corresponding \cftoc problem consists of a quadratic objective function and affine dynamics constraints. For now, consider only upper and lower bounds on the control signal. Let $t$ denote the time-index in the \MPC optimization problem (\ie{}, $t=0$ is the current time), $N$ the prediction horizon, $x_t \in \realnums{\nx} $ the state, $u_t \in \realnums{\nuut{t}}$ the control input, $\xinit \in \realnums{\nx}$ the initial state, and
\begin{equation}
\timestack{x} \triangleq \bracks{x_0^T,\ldots,x_N^T}^T, \quad \timestack{u} \triangleq \bracks{u_0^T,\ldots, u_{N-1}^T}^T,
\end{equation}
the stacked states and control inputs, respectively. The \cftoc problem can then be written in the form
%
\begin{equation}
\minimizes{&\sum_{t=0}^{N-1}\Big( \frac{1}{2}\begin{bmatrix}
x_t \\ u_t
\end{bmatrix}^T\Qpart{t} \begin{bmatrix}
x_t \\ u_t
\end{bmatrix} +  \lin{t}^T \begin{bmatrix}
x_t\\u_t
\end{bmatrix} +\cpart{t} \Big) \\&+\frac{1}{2}x_N^T\Qx{N}x_N + \lin{x,N}^Tx_N+\cpart{N} }{\timestack{x},\timestack{u}}{&x_0 = \xinit \\ &x_{t+1} = A_t x_t + B_tu_t + a_t, \; t \in \intset{0}{N-1} \\ &\umin{t} \preceq u_t \preceq \umax{t}, \; t \in \intset{0}{N-1},} \label{eq:upd:primal_problem}
\end{equation}
where $\Qx{N} \in \possemidefmats^{\nx}$ and $\Qpart{t} \in \possemidefmats^{\nx + \nuut{t}}$. Let the matrix $\Qpart{t}$ and the vector $\lin{t} \in \realnums{\nx +\nuut{t}}$ be partitioned as
\begin{equation}
\Qpart{t} = \begin{bmatrix}
\Qx{t} & \Qxu{t} \\ \Qxu{t}^T & \Qu{t}
\end{bmatrix}, \quad \lin{t} = \begin{bmatrix}
\bar{l}_{x,t} \\ \bar{l}_{u,t}
\end{bmatrix}, \; t \in \intset{0}{N-1}.
\end{equation} 
Note that the more common additional assumption $\Qu{t} \in \posdefmats^{\nuut{t}}$ is not used in this problem formulation in order to, for example, be able to represent dual \MPC problems.

Furthermore, define $\lambda_{t+1}$ as the dual variable for the equality constraint $-x_{t+1} + A_t x_t + B_t u_t + a_t = 0$ and $\mu_t$ as the dual variable for the inequality constraint
\begin{equation}
\begin{bmatrix}
I\\-I
\end{bmatrix} u_t - \begin{bmatrix}
\umax{t} \\ - \umin{t}
\end{bmatrix} \preceq 0.
\end{equation} 
%
%

%


\section{Optimization Preliminaries}
\label{sec:upd:opt}
%

The \cftoc problem~\eqref{eq:upd:primal_problem} is a convex quadratic programming (\QP) problem. Hence, it can be solved using several different types of optimization methods, where one common type is \AS methods, see for example,~\cite{boyd04:_convex_optim,nocedal06:num_opt}.

\subsection{Active-set QP methods}
\AS methods solve a \QP problem by determining the set of constraints that are active, \ie, hold with equality, at the optimal solution. This set of active constraints is denoted the optimal \emph{active set}, and an \AS solver operates by finding this set of constraints iteratively,~\cite{nocedal06:num_opt}. Since the optimal active set is usually not known a priori, an \AS solver starts with an initial set of constraints that are forced to hold with equality, and then changes this so-called \emph{working set} by adding or removing constraints until the optimal active set has been determined. These modifications of the working set are usually relatively small and the modifications of the corresponding \KKT matrix between \AS iterations are thus of low rank. The modification techniques presented in this work can be used both by traditional \AS solvers where one constraint is added or removed to the working set at each iteration, and for solvers that add or remove several constraints to the working set at each iteration such as those presented in~\cite{nocedal06:num_opt,axehill08:thesis,axehill08:_dual_gradien_projec_quadr_progr}.

Let $\wset{j}$ denote the subset of the working set that contains the indices of the inequality constraints that temporarily hold with equality at \AS iteration $j$, and let $\wsetc{j}$ denote the set of inequality constraints that are temporarily disregarded at \AS iteration $j$. In problem~\eqref{eq:upd:primal_problem} only control input constraints are used, and hence forcing a constraint to hold with equality corresponds to removing that control input as an optimization variable from the optimization problem and treating it as a constant. Similarly, by disregarding an inequality constraint, the corresponding control input becomes unconstrained. This can be formalized by introducing $\ufree{t}$ as the free part of the control inputs and $\ufixed{t}$ as the fixed part as follows 
\begin{equation}
\ufree{t} \triangleq u_{t}(\wsetc{j}), \; \ufixed{t} \triangleq u_{t}(\wset{j}), \; u_t = \Pi_t \begin{bmatrix} \label{eq:upd:u_part}
\ufree{t} \\ \ufixed{t}
\end{bmatrix}\!\!, \; t \!\in\! \intset{0}{N-1}, \! \!
\end{equation}
where $\Pi_t$ is a permutation matrix satisfying $\Pi_t^T \Pi_t = I$. Here $u_t(\wset{j})$ is used to denote the control inputs at time $t$ with corresponding constraints in $\wset{j}$. Using this notation, $B_t$, $\Qu{t}$, $\Qxu{t}$ and $\bar{l}_{u,t}$  can be partitioned consistently with $\ufree{t}$ and $\ufixed{t}$: 
\begin{subequations}
\begin{align}
 &\begin{bmatrix}
\Bw{t} & \Bv{t}
\end{bmatrix} \Pi_t^T  \triangleq B_{t}, \; \begin{bmatrix}
\Qxw{t} & \Qxv{t}
\end{bmatrix} \Pi_t^T \triangleq \Qxu{t}, \label{eq:upd:B_part}\\
&\Pi_t \begin{bmatrix}
\Qw{t} & \Qwv{t} \\ \Qwv{t}^T & \Qv{t}
\end{bmatrix} \Pi_t^T \triangleq \Qu{t}, \; \Pi_t\begin{bmatrix}
\bar l_{w,t} \\ \bar l_{v,t}
\end{bmatrix}  \triangleq \bar{l}_{u,t}. \label{eq:upd:Qu_part}
\end{align}
\end{subequations}
By using this partitioning of the control input and the corresponding matrices, the \uftoc{} problem that is solved at \AS iteration $j$ to compute the search direction is given by
\begin{equation}
\minimizest{&\!\!\sum_{t=0}^{N-1}\!\Big( \frac{1}{2}\begin{bmatrix}
x_t \\ \ufree{t}
\end{bmatrix}^T\! \!\begin{bmatrix}
\Qx{t}\! \! &\! \! \Qxw{t} \\ \Qxw{t}^T \! \!&\! \! \Qw{t}
\end{bmatrix} \! \! \begin{bmatrix}
x_t \\ \ufree{t}
\end{bmatrix}\! + \! \begin{bmatrix}
\linx{t} \\ \linw{t}
\end{bmatrix}^T \! \! \begin{bmatrix}
x_t\\\ufree{t}
\end{bmatrix} \!+\!\cv{t} \Big) \\&\!\!+\frac{1}{2}x_N^T\Qx{N}x_N + \lin{x,N}^Tx_N+\cpart{N} }{\timestack{x},\timestack{w}}{&x_0 = \xinit \\ &x_{t+1} = A_t x_t + \Bw{t}\ufree{t} + \av{t}, \; t \in \intset{0}{N-1},} \label{eq:upd:uftoc_prob}
\end{equation}
where
\begin{subequations}
\begin{align}
\linx{t} &\triangleq \bar l_{x,t} + \Qxv{t}\ufixed{t}, \quad \linw{t} \triangleq \bar l_{w,t} + \Qwv{t}\ufixed{t}, \\
\cv{t} &\triangleq c_t + \bar l_{v,t}^T\ufixed{t} + \frac{1}{2} \ufixed{t}^T \Qv{t}\ufixed{t}, \quad \av{t}  \triangleq a_t + \Bv{t}\ufixed{t}.
\end{align}
\end{subequations}
Computing the sequence of search directions in an \AS type solver applied to the \cftoc problem~\eqref{eq:upd:primal_problem} hence corresponds to solving a sequence of \uftoc problems in the form~\eqref{eq:upd:uftoc_prob}.


\subsection{Standard Riccati recursion}
\label{subsec:standard_riccati}
The solution to the \uftoc{} problem~\eqref{eq:upd:uftoc_prob} is computed by solving a set of linear equations known as the \KKT optimality conditions. The special structure of the \uftoc{} problem considered in this work corresponds to a sparse, almost block diagonal, \KKT system which can be solved very efficiently using a Riccati recursion, see, \eg,~\cite{jonson83:thesis,rao98:_applic_inter_point_method_model_predic_contr,axehill08:thesis,nielsen15:licthesis}. The Riccati recursion consists of a factorization of the \KKT matrix (Algorithm~\ref{alg:upd:ric_fac}), followed by back- and forward substitutions  (algorithms~{\ref{alg:upd:ric_rec_bw_rec}-\ref{alg:upd:dual_var_fw_rec}}) for computing the solution to~\eqref{eq:upd:uftoc_prob},~\cite{vandenberghe02:_robus_full}. Algorithm~\ref{alg:upd:ric_fac}, which is the computationally demanding part of the Riccati recursion, computes the variables $P_{t+1} \in \possemidefmats^{\nx}$ and $K_{t+1} \in \realnums{\nwt{t}\times \nx}$ using the auxiliary variables
\begin{equation}
M_{t+1} \! \triangleq\!  \begin{bmatrix}
F_{t+1}\!\! &\!\! H_{t+1} \\ H_{t+1}^T\!\! &\!\! G_{t+1}
\end{bmatrix}\!\! \triangleq\!\! \begin{bmatrix}
\Qx{t}\! \! &\! \! \Qxw{t} \\ \Qxw{t}^T \! \!&\! \! \Qw{t}
\end{bmatrix} + \begin{bmatrix}
A_t^T \\ \Bw{t}^T
\end{bmatrix}\! P_{t+1}\! \begin{bmatrix}
A_t^T \\ \Bw{t}^T
\end{bmatrix}^T \!\!\!\!\!, \label{eq:upd:M}
\end{equation}
where $M_{t+1} \in \possemidefmats^{\nx \times \nwt{t}}$, $F_{t+1} \in \possemidefmats^{\nx}$, $G_{t+1} \in \possemidefmats^{\nwt{t}}$ and $H_{t+1} \in \realnums{\nx \times \nwt{t}}$  by construction. Since $\Qw{t} \in \possemidefmats^{\nwt{t}}$ it follows that also $G_{t+1}\in \possemidefmats^{\nwt{t}}$. When one (or more) $G_{t+1}$ is singular a non-unique Riccati factorization still exists, but the solution of the \KKT system is either non-unique or non-existing,~\cite{axehill08:thesis,nielsen15:licthesis}. How to handle this case is determined at the solver level, and one way to do this is presented in~\cite{axehill08:thesis}.

Using the Riccati recursion to compute the solution to~\eqref{eq:upd:uftoc_prob} requires $\Ordo{N}$ complexity, compared to $\Ordo{N^3}$ or $\Ordo{N^2}$ for dense solvers that re-factorize, or modify the factorizations of, the \KKT matrix without exploiting the \uftoc problem structure, respectively. For more information, see, \eg,~\cite{nocedal06:num_opt}.

\begin{algorithm}
  \caption{Riccati Factorization} \label{alg:upd:ric_fac}
  \begin{algorithmic}[1]
    \STATE $P_N := \Qx{N}$
    \FOR{$t=N-1,\ldots,0$}
    \STATE $F_{t+1} := \Qx{t} + A_t^TP_{t+1}A_{t}$\label{alg:upd:ric_fac:line:F_comp} \\
    \STATE $G_{t+1} := \Qw{t} + \Bw{t}^TP_{t+1}\Bw{t}$ \label{alg:upd:ric_fac:line:G_comp}\\
    \STATE $H_{t+1} := \Qxw{t} + A_{t}^TP_{t+1}\Bw{t}$\label{alg:upd:ric_fac:line:H_comp} \\
    \STATE Compute and store a factorization of $G_{t+1}$.\label{alg:upd:ric_fac:line:factorize_G}
    \STATE Compute a solution $K_{t+1}$ to: \\  
    $G_{t+1}K_{t+1} = -H_{t+1}^T$ \label{alg:upd:ric_fac:line:GKeqHT}\\
    \STATE $P_{t} := F_{t+1} - K_{t+1}^TG_{t+1}K_{t+1}$\label{alg:upd:ric_fac:line:P_comp}
    \ENDFOR
  \end{algorithmic}
\end{algorithm}
\vspace{-2mm}

\begin{algorithm}[ht]
  \caption{Backward recursion}
  \label{alg:upd:ric_rec_bw_rec}
  \begin{algorithmic}[1]
    \STATE $\Psi_N := -\linx{N}$, \quad $\cb{N} := c_N$
    \FOR{$t = N-1,\hdots,0$}
    \STATE Compute a solution $\uaff{t+1}$ to: \\ 
     $G_{t+1}k_{t+1} = \Bw{t}^T\Psi_{t+1} -
        \linw{t} - \Bw{t}P_{t+1} \av{t}$
    \STATE $\Psi_{t} := A_t^T\Psi_{t+1}
    - H_{t+1}\uaff{t+1} - \linx{t} -A_t^TP_{t+1}\av{t}$ \\
    \STATE $\cb{t}:= \cb{t+1} +\cv{t} + \frac{1}{2}\av{t}^TP_{t+1}\av{t} - \Psi_{t+1}^T\av{t}- \frac{1}{2}\uaff{t+1}^T G_{t+1}\uaff{t+1}$
    \ENDFOR
  \end{algorithmic}
\end{algorithm}
\vspace{-2mm}

\begin{algorithm}[ht]
  \caption{Forward recursion}
  \label{alg:upd:ric_rec_fw_rec}
  \begin{algorithmic}[1]
  	\STATE $x_0 := \xinit$
    \FOR{$t = 0,\hdots,N-1$}
    \STATE $w_t := \uaff{t+1} + K_{t+1}x_t$
    \STATE $x_{t+1} := A_tx_t + \Bw{t} w_t + \av{t}$
    \STATE $\lambda_t := P_{t}x_t - \Psi_t$
    \ENDFOR
    \STATE $\lambda_N := P_Nx_N -\Psi_N$
  \end{algorithmic}
\end{algorithm}

\vspace{-2mm}

\begin{algorithm}[ht]
  \caption{Forward recursion (Dual variables)}
  \label{alg:upd:dual_var_fw_rec}
  \begin{algorithmic}[1]
    \FOR{$t = 0,\hdots,N-1$}
    \STATE $\mu_t({\wsetc{j}}) := 0$ 
    \STATE $\mu_t(\wset{j}) := \bar{l}_{v,t}+ \Qxv{t}^Tx_t  + \Qwv{t}^Tw_{t}+ \Qv{t}\ufixed{t} + \Bv{t}^T\lambda_{t+1}
    $ 
    \ENDFOR
  \end{algorithmic}
\end{algorithm}


\section{Low-Rank Modification of the \\Riccati Factorization}
\label{sec:upd:lr_modifications}
A standard approach to improve the performance of an \AS solver is to modify the factorization of the \KKT matrix instead of re-factorizing it between \AS iterations,~\cite{nocedal06:num_opt}. Here it will be shown how to modify the Riccati factorization (Algorithm~1) between \AS iterations when solving a \cftoc problem~\eqref{eq:upd:primal_problem}. Since $\Qu{t} \in \possemidefmats^{\nuut{t}}$, and hence possibly also $\Qw{t} \in \possemidefmats^{\nwt{t}}$, the \KKT matrix for the \uftoc problem that is solved to compute the search direction can be singular (some $G_{t+1}$ in Algorithm~\ref{alg:upd:ric_fac} can be singular). The derivations in this section are similar to the one in~\cite{nielsen13low_rank_updates}, but the extension presented here adds more technical depth since additional mathematical tools such as generalized Schur complements (\GSCs), the quotient formula for \GSCs and the Moore-Penrose pseudo-inverse are required to cope with the possibly singular \KKT matrix. For a detailed description of these, see for example~\cite{albert69,carlsonPseudo,zhang2005schur}. 

Furthermore, in~\cite{nielsen13low_rank_updates} it was only shown how to modify the Riccati factorization when modifying the working set at a single time instance. If constraints at different time indices are added or removed  in the same \AS iteration, the factorization can be modified by performing a sequence of complete modifications. However, in this section it will be shown how to handle \emph{either} adding \emph{or} removing several constraints at \emph{different} time indices by instead gradually increasing the size of the modification of the factorization.   If the solver both adds and removes constraints in the same \AS iteration, the factorization must be modified sequentially. Note that as the size of the modification increases, it might be better to re-compute the remaining part of the factorization from scratch.
Which approach that is most efficient depends on for example the size of the modification, and can be investigated off-line. That work is however outside the scope of this paper.

By introducing the \GSC operator as $\pschur{}{}$, the \GSC with respect to $G_{t+1}$ of $M_{t+1}$ in~\eqref{eq:upd:M}
is $\pschur{M_{t+1}}{G_{t+1}} \triangleq F_{t+1} - H_{t+1}\pinv{G_{t+1}}H_{t+1}^T$.
 Here $\pinv{G_{t+1}}$ is the Moore-Penrose pseudo inverse of $G_{t+1}$. Hence, by using Line~\ref{alg:upd:ric_fac:line:GKeqHT} in Algorithm~\ref{alg:upd:ric_fac} and basic properties of the pseudo inverse, the matrix $P_t$ in Line~\ref{alg:upd:ric_fac:line:P_comp} in Algorithm~\ref{alg:upd:ric_fac} can be calculated as
\begin{equation}
\begin{split}
P_{t} =  F_{t+1} - K_{t+1}^T G_{t+1}K_{t+1} = \pschur{M_{t+1}}{G_{t+1}}. \label{eq:upd:PeqPseudoSchur}
\end{split}
\end{equation}
%
\begin{lemma}[Quotient formula for \GSC]
\label{lem:pseude_schur_quotient}
Let the positive semi-definite matrices $\mathcal{M} \succeq 0$ and $\bar{ \mathcal{M}} \succeq 0$ be partitioned as
\begin{equation}
\mathcal{M} = \left[\begin{array}{c|cc}
A & B & C \\
& & \\[\dimexpr-\normalbaselineskip-2pt] \hline
& & \\[\dimexpr-\normalbaselineskip+2pt]
 B^T & D & E \\ C^T & E^T & F
\end{array}\right], \quad \bar{ \mathcal{M}} = \begin{bmatrix}
D & E \\ E^T & F
\end{bmatrix}.
\end{equation}
Then 
\begin{equation}
\begin{split}
&\pschur{\parens{\pschur{\mathcal{M}}{F}}}{\parens{\pschur{\bar{\mathcal{M}}}{F}}} =\pschur{\mathcal{M}}{\bar{\mathcal{ M}}} = A - B\pinv{D}B^T -\\
&\parens{C\!-\!B\pinv{D}E}\pinv{\parens{F \!-\! E^T\pinv{D}E}}\parens{C^T \! - \! E^T \pinv{D}B^T} \succeq 0, \label{eq:upd:lem:quotient_pschur}
\end{split}
\end{equation}
\begin{equation}
C^T \! -E^T \!\pinv{D}B^T \!  \in \! \range{F \! - E^T\! \pinv{D}E}\!, \; \; \; F  - E^T\pinv{D}E \succeq 0.
\end{equation}
\end{lemma}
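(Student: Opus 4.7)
My plan is to prove the three assertions in the lemma---the two-step quotient identity, the explicit closed-form, and the positive semidefiniteness together with the range condition---by working out $\pinv{\bar{\mathcal{M}}}$ block-wise and then passing to iterated generalized Schur complements. The cleanest route is to first establish the range and semidefiniteness consequences of $\mathcal{M}\succeq 0$ and $\bar{\mathcal{M}}\succeq 0$, then derive the explicit expression directly from a block pseudo-inverse, and finally exploit the symmetric role of the two end blocks of $\bar{\mathcal{M}}$ to identify the result with $\pschur{\parens{\pschur{\mathcal{M}}{F}}}{\parens{\pschur{\bar{\mathcal{M}}}{F}}}$.

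The first step is to record standard PSD block-matrix facts. Since $\bar{\mathcal{M}}\succeq 0$, one has $\range{E}\subseteq \range{D}$ (equivalently $D\pinv{D}E = E$) and the generalized Schur complement $S := F - E^{T}\pinv{D}E \succeq 0$. Next, forming the generalized Schur complement of $\mathcal{M}$ with respect to its middle diagonal block $D$ yields
\begin{equation*}
\pschur{\mathcal{M}}{D} = \begin{bmatrix} A - B \pinv{D} B^{T} & C - B\pinv{D}E \\ C^{T} - E^{T}\pinv{D}B^{T} & S \end{bmatrix}\succeq 0.
\end{equation*}
The $(2,2)$-block here is $S$, so the same PSD block-matrix principle delivers the range inclusion $C^{T} - E^{T}\pinv{D}B^{T} \in \range{S}$. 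The closed-form on the right-hand side of \eqref{eq:upd:lem:quotient_pschur} is then exactly $\pschur{\parens{\pschur{\mathcal{M}}{D}}}{S}$, and positive semidefiniteness propagates one more time to give $\pschur{\mathcal{M}}{\bar{\mathcal{M}}} \succeq 0$.

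To close the chain, it remains to identify this closed-form with both $\pschur{\mathcal{M}}{\bar{\mathcal{M}}}$ and $\pschur{\parens{\pschur{\mathcal{M}}{F}}}{\parens{\pschur{\bar{\mathcal{M}}}{F}}}$. I would do this by writing $\pinv{\bar{\mathcal{M}}}$ in the standard block form with $\pinv{D}$ and $\pinv{S}$ as building blocks (legitimized by the range inclusions above), substituting into $\pschur{\mathcal{M}}{\bar{\mathcal{M}}} = A - \begin{bmatrix} B & C \end{bmatrix}\pinv{\bar{\mathcal{M}}}\begin{bmatrix} B & C \end{bmatrix}^{T}$, and collecting terms to recover the displayed expression. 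The second equality then follows by symmetry: repeating the argument with the roles of $D$ and $F$ swapped produces a closed-form in which $\pinv{F}$ and $\pschur{\bar{\mathcal{M}}}{F}$ play the roles of $\pinv{D}$ and $S$, which is precisely $\pschur{\parens{\pschur{\mathcal{M}}{F}}}{\parens{\pschur{\bar{\mathcal{M}}}{F}}}$. The main obstacle is the careful bookkeeping of range inclusions needed to justify cancellations such as $D\pinv{D}E = E$ and $S\pinv{S}(C^{T}-E^{T}\pinv{D}B^{T}) = C^{T}-E^{T}\pinv{D}B^{T}$; without honest inverses, every identity of the form $X\pinv{X}Y = Y$ must be earned via one of the PSD-induced range inclusions, and this is the essential extra work beyond the classical invertible-matrix quotient formula.
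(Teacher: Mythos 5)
The paper does not prove this lemma from first principles: it simply invokes Theorem~4 of~\cite{carlsonPseudo} (the quotient formula for generalized Schur complements) and defers the details to the licentiate thesis. Your proposal is therefore a genuinely different, self-contained route, and most of it is sound: reducing everything to the two Albert-type facts (for a positive semidefinite block matrix the off-diagonal block satisfies the range inclusion with respect to the diagonal block, and the generalized Schur complement is again positive semidefinite), computing $\pschur{\mathcal{M}}{D}$ explicitly, reading off the range inclusion $C^T - E^T\pinv{D}B^T \in \range{S}$ from its $(2,2)$ block, identifying the explicit right-hand side of~\eqref{eq:upd:lem:quotient_pschur} as the iterated complement $\pschur{\parens{\pschur{\mathcal{M}}{D}}}{S}$, and obtaining the left-most expression by swapping the roles of $D$ and $F$ are all correct steps.

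The one step that would fail as written is the claim that the range inclusions ``legitimize'' writing $\pinv{\bar{\mathcal{M}}}$ in the standard Banachiewicz block form built from $\pinv{D}$ and $\pinv{S}$. That block matrix is in general \emph{not} the Moore--Penrose pseudo-inverse of a positive semidefinite $\bar{\mathcal{M}}$: for $D=E=F=1$ the block formula returns $\mathrm{diag}(1,0)$, whereas $\pinv{\bar{\mathcal{M}}} = \tfrac{1}{4}\bar{\mathcal{M}}$. What is true, and what rescues your argument, is that the block formula always yields a reflexive generalized inverse $N$ (one checks $\bar{\mathcal{M}}N\bar{\mathcal{M}} = \bar{\mathcal{M}}$ via the congruence that block-diagonalizes $\bar{\mathcal{M}}$ into $\mathrm{diag}(D,S)$), and that the quantity $\begin{bmatrix} B & C\end{bmatrix} G \begin{bmatrix} B & C\end{bmatrix}^T$ takes the same value for \emph{every} generalized inverse $G$ of $\bar{\mathcal{M}}$, because $\mathcal{M}\succeq 0$ forces the columns of $\begin{bmatrix} B & C\end{bmatrix}^T$ to lie in $\range{\bar{\mathcal{M}}}$. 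You need to state and use this invariance explicitly; substituting the block formula for $\pinv{\bar{\mathcal{M}}}$ as an identity of matrices is false, even though the scalar expression it ultimately produces for $\pschur{\mathcal{M}}{\bar{\mathcal{M}}}$ is the right one. With that repair, your derivation is a complete and more informative alternative to the paper's citation-only proof.
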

\begin{proof}
Lemma~\ref{lem:pseude_schur_quotient} follows directly from Theorem 4 in~\cite{carlsonPseudo}. The details are given in the proof of Lemma 4.1 in~\cite{nielsen15:licthesis}.
\end{proof}
%

 In this paper, a tilde will be used to indicate a matrix that has been modified. Hence, the modified version of some matrix $X$ is denoted $\tilde X$.



\subsection{Sequence of low-rank modifications}
\label{subsec:upd:impact_on_subseq_iter}
Assume that $\Pt{t+1}$ for some $t \in \intset{0}{N-1}$ is a \emph{downdate} of $P_{t+1}$, given by (the superscript ``$-$'' indicates a downdate)
%
\begin{equation}
\Pt{t+1} = P_{t+1} - \Vm{t+1}{\Cmpinv{t+1}} \Vm{t+1}^T \in \possemidefmats^{\nx}, \label{eq:upd:Pt_compact_dwn_tm_before_add_rem}
\end{equation}
with $\Cm{t+1} \in \possemidefmats^{\kprev}$, $\Vm{t+1} \in \realnums{\nx \times \kprev}$ and $\Vm{t+1}^T \in \range{\Cm{t+1}}$. Later in this section, and in sections~\ref{subsec:upd:remove_control_input_constr} and~\ref{subsec:upd:add_control_input_constr}, Lemma~\ref{lem:pseude_schur_quotient} will be used to show that this assumption holds for all modifications presented in this paper. The downdate is considered to be of low rank if $\kprev < \nx$. It will now be shown how this {downdate} of $P_{t+1}$ affects the matrices in the Riccati factorization for the time-steps $\tau \in \intset{0}{t}$. By substituting $P_{t+1}$ in lines~\ref{alg:upd:ric_fac:line:F_comp}-\ref{alg:upd:ric_fac:line:H_comp} in Algorithm~\ref{alg:upd:ric_fac} with $\Pt{t+1}$ from~\eqref{eq:upd:Pt_compact_dwn_tm_before_add_rem}, straightforward calculations give
\vspace{-3pt}
\begin{subequations}
\label{eq:upd:def_Ft_Gt_Ht_subseq_dwn}
\begin{align}
&\Ft{t+1}  = F_{t+1} - A_t^T\Vm{t+1}\Cmpinv{t+1}\Vm{t+1}^T A_t, \\
&\Gt{t+1} = G_{t+1} - \Bw{t}^T \Vm{t+1} {\Cmpinv{t+1}}\Vm{t+1}^T\Bw{t}, \label{eq:upd:def_Gt_subseq_dwn} \\
&\Ht{t+1} = H_{t+1} - A_t^T \Vm{t+1} {\Cmpinv{t+1}}\Vm{t+1}^T \Bw{t}. \label{eq:upd:def_Ht_subseq_dwn}
\end{align}
\end{subequations}
The equations in~\eqref{eq:upd:def_Ft_Gt_Ht_subseq_dwn} can be written in matrix form as
\begin{equation}
\begin{split}
\Mt{t+1} &= \begin{bmatrix}
\Ft{t+1} & \Ht{t+1} \\ \Ht{t+1}^T & \Gt{t+1}
\end{bmatrix} = \begin{bmatrix}
F_{t+1} & H_{t+1} \\ H_{t+1}^T & G_{t+1}
\end{bmatrix} - \\&\begin{bmatrix}
A_t^T \Vm{t+1} \\ \Bw{t}^T \Vm{t+1}
\end{bmatrix}  {\Cmpinv{t+1}} \begin{bmatrix}
A_t^T \Vm{t+1} \\ \Bw{t}^T \Vm{t+1}
\end{bmatrix}^T \in \possemidefmats^{\nx + \nwt{t}}. \label{eq:upd:Mt_written_out_sub_iter}
\end{split}
\end{equation}
Since $\Pt{t+1} \in \possemidefmats^{\nx}$ and $\Mt{t+}$ is defined as in~\eqref{eq:upd:M}, it follows that $\Mt{t+}$ is positive semidefinite by construction. Now, define 
\begin{align}
\Mh{t+1} &\triangleq \left[\begin{array}{c|cc}
F_{t+1} & H_{t+1} & A_t^T\Vm{t+1} \\
& & \\[\dimexpr-\normalbaselineskip+2pt] \hline
& & \\[\dimexpr-\normalbaselineskip+3pt]
H_{t+1}^T & G_{t+1} & \Bw{t}^T\Vm{t+1} \\
\Vm{t+1}^TA_t & \Vm{t+1}^T \Bw{t} & \Cm{t+1}
\end{array}\right],
\end{align}
and let $\Mb{t+1}$ be the second diagonal block of $\Mh{t+1}$. Note that $\Gt{t+1} = \pschur{\Mb{t+1}}{\Cm{t+1}}$.
From~\cite{albert69} it follows that $\Mh{t+1} \succeq 0$ since $\Cm{t+1} \succeq 0$ and $\Vm{t+1}^T \in \range{\Cm{t+1}}$ by assumption, and $\Mt{t+1} = \pschur{\Mh{t+1}}{\Cm{t+1}} \succeq 0$ in~\eqref{eq:upd:Mt_written_out_sub_iter}. Hence also $\Mb{t+1} \succeq 0$, and Lemma~\ref{lem:pseude_schur_quotient} thus can be used to compute $\Pt{t} = \pschur{\Mt{t+1}}{\Gt{t+1}} = \pschur{\Mh{t+1}}{\Mb{t+1}}$ (first equality in~\eqref{eq:upd:lem:quotient_pschur}). By using the block partitioning $A=F_{t+1}$, $B=H_{t+1}$, $C=A_t^T \Vm{t+1}$, $D=G_{t+1}$, $E=\Bw{t}^T \Vm{t+1}$ and $F = \Cm{t+1}$, the modified version of $P_{t}$ is computed using the second equality in~\eqref{eq:upd:lem:quotient_pschur} in Lemma~\ref{lem:pseude_schur_quotient} as
\begin{equation}
\Pt{t} = P_t - \Vm{t} {\Cmpinv{t}} \Vm{t}^T \in \possemidefmats^{\nx}, \label{eq:upd:Pt_written_out_sub_iter_dwndate}
\end{equation}
where
\begin{subequations}
\begin{align}
\Vm{t} &\triangleq \parens{A_t^T - H_{t+1}\pinv{G_{t+1}}\Bw{t}^T}\Vm{t+1} \in \realnums{\nx \times \kprev}, \\
\Cm{t} &\triangleq \Cm{t+1} - \Vm{t+1}^T \Bw{t} \pinv{G_{t+1}} \Bw{t}^T \Vm{t+1} \in \possemidefmats^{\kprev}, \\
\Vm{t}^T &\in \range{\Cm{t}} .
\end{align}
\end{subequations}
%
Using similar calculations, an \emph{update} of $P_{t+1}$ in the form
\begin{equation}
\begin{split}
\Pt{t+1} &= P_{t+1} + \Vp{t+1}{\Cppinv{t+1}} \Vp{t+1}^T \in \possemidefmats^{\nx} \iff \\ P_{t+1} &= \Pt{t+1} - \Vp{t+1}{\Cppinv{t+1}} \Vp{t+1}^T \in \possemidefmats^{\nx}, \label{eq:upd:Pt_compact_upd_tm_before_add_rem}
\end{split}
\end{equation}
with $\Cp{t+1} \in \possemidefmats^{\kprev}$, $\Vp{t+1} \in \realnums{\nx \times \kprev}$ and $\Vp{t+1}^T \in \range{\Cp{t+1}}$, can be shown to result in the update
\begin{equation}
\Pt{t} = P_t + \Vp{t} {\Cppinv{t}}\Vp{t}^T \in \possemidefmats^{\nx}, \label{eq:upd:Pt_written_out_sub_iter_update}
\end{equation}
with (here $\Gt{t+1}$ and $\Ht{t+1}$ are defined similarly as in~\eqref{eq:upd:def_Ft_Gt_Ht_subseq_dwn})
\begin{subequations}
\label{eq:upd:def_Vp_Cp}
\begin{align}
\Vp{t} &\triangleq \parens{A_t^T -\Ht{t+1} \pinv{\Gt{t+1}} \Bw{t}^T}\Vp{t+1} \in \realnums{\nx \times \kprev}, \\ 
\Cp{t} &\triangleq \Cp{t+1} - \Vp{t+1}^T\Bw{t}\pinv{\Gt{t+1}} \Bw{t}^T \Vp{t+1} \in \possemidefmats^{\kprev}, \\
\Vp{t}^T &\in \range{\Cp{t+1}}. 
\end{align}
\end{subequations}
Note that the modified matrices $\Ht{t+1}$ and $\Gt{t+1}$ are used in~\eqref{eq:upd:def_Vp_Cp}.
Hence, a modification of $P_{t+1}$ of at most rank $\kprev$ results in a similar modification of $P_t$ of (also) at most rank $\kprev$. 
\begin{theorem}
\label{thm:upd:rank_k_modifications}
Consider a modification of at most rank $\kprev$ of $P_{\tm} \in \possemidefmats^{\nx}$ in Algorithm~\ref{alg:upd:ric_fac} at a single time instant $\tm \in \intset{1}{N}$ in either of the forms
\begin{equation}
\begin{cases}
\Pt{\tm} = P_{\tm} - \Vm{\tm} {\Cmpinv{\tm}} \Vm{\tm}^T \in \possemidefmats^{\nx} \textrm{ (downdate)} \\
\Pt{\tm} = P_{\tm} + \Vp{\tm} {\Cppinv{\tm}} \Vp{\tm}^T \in \possemidefmats^{\nx} \textrm{ (update)}
\end{cases} \label{eq:upd:Ptm_thm}
\end{equation}
where $\Cm{\tm}, \Cp{\tm} \!\!\in \possemidefmats^{\kprev}$, $\Vm{\tm},\Vp{\tm} \! \! \in  \! \realnums{\nx \times \kprev}$, and $\Vm{\tm}^T \! \! \! \in \! \range{\Cm{\tm}}$, $\Vp{\tm}^T \! \in \! \range{\Cp{\tm}}$, respectively. Then it holds for all $t \!\in \! \intset{0}{\tm-1}$ that $P_t \in \possemidefmats^{\nx}$ is modified as
\begin{equation}
\begin{cases}
\Pt{t} = P_t -\Vm{t} {\Cmpinv{t}}\Vm{t}^T \in \possemidefmats^{\nx} \textrm{ (downdate)} \\
\Pt{t} = P_t + \Vp{t} { \Cppinv{t}} \Vp{t}^T \in \possemidefmats^{\nx} \textrm{ (update)}
\end{cases}
\end{equation}
with $\Cm{t},\Cp{t} \in \possemidefmats^{\kprev}$, $\Vm{t},\Vp{t} \in \realnums{\nx \times \kprev}$, and $\Vm{t}^T\in \range{\Cm{t}}$, $\Vp{t}^T \in \range{\Cp{t}}$, respectively.
\end{theorem}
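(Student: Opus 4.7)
The plan is a backward induction on $t$ running from $\tm$ down to $0$, with separate treatment of the downdate and update cases. The base case $t=\tm$ is exactly the hypothesis of the theorem. The inductive step for each case amounts to promoting the calculations already sketched in Subsection~\ref{subsec:upd:impact_on_subseq_iter} into a genuine proof, by verifying at every step three things at once: the asserted algebraic identity for $\Pt{t}$, positive semidefiniteness of the new $C$-matrix, and the range inclusion for the new $V$-matrix. Lemma~\ref{lem:pseude_schur_quotient} is what makes the induction go through cleanly, since it delivers all three conclusions simultaneously from a single application.

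For the downdate inductive step I would assume that $\Pt{t+1}=P_{t+1}-\Vm{t+1}\Cmpinv{t+1}\Vm{t+1}^T$ with $\Cm{t+1}\succeq 0$ and $\Vm{t+1}^T\in\range{\Cm{t+1}}$, substitute into lines~\ref{alg:upd:ric_fac:line:F_comp}--\ref{alg:upd:ric_fac:line:H_comp} of Algorithm~\ref{alg:upd:ric_fac} to obtain the modified $\Mt{t+1}$ in~\eqref{eq:upd:Mt_written_out_sub_iter}, and then form the augmented matrix $\Mh{t+1}$. By Albert's characterization of positive semidefinite block matrices, the combination of $\Cm{t+1}\succeq 0$, $\Vm{t+1}^T\in\range{\Cm{t+1}}$ and $M_{t+1}\succeq 0$ implies $\Mh{t+1}\succeq 0$; its principal submatrix $\Mb{t+1}$ is then also positive semidefinite. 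I would then invoke Lemma~\ref{lem:pseude_schur_quotient} with the block partition $A=F_{t+1}$, $B=H_{t+1}$, $C=A_t^T\Vm{t+1}$, $D=G_{t+1}$, $E=\Bw{t}^T\Vm{t+1}$, $F=\Cm{t+1}$. The first equality in~\eqref{eq:upd:lem:quotient_pschur} gives $\Pt{t}=\pschur{\Mt{t+1}}{\Gt{t+1}}=\pschur{\Mh{t+1}}{\Mb{t+1}}$, the second equality yields the explicit downdate formula~\eqref{eq:upd:Pt_written_out_sub_iter_dwndate} with the $\Vm{t}$ and $\Cm{t}$ stated in the surrounding subequations, and the two trailing inclusions supplied by the lemma deliver $\Cm{t}\succeq 0$ together with $\Vm{t}^T\in\range{\Cm{t}}$, closing the induction.

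The update case I would reduce to the downdate case by reading~\eqref{eq:upd:Pt_compact_upd_tm_before_add_rem} in reverse: the identity $P_{t+1}=\Pt{t+1}-\Vp{t+1}\Cppinv{t+1}\Vp{t+1}^T$ exhibits $P_{t+1}$ as a downdate of $\Pt{t+1}$. Running the Riccati factorization with $\Pt{t+1}$ in place of $P_{t+1}$ produces precisely the tilded quantities $\Gt{t+1}$ and $\Ht{t+1}$ appearing in~\eqref{eq:upd:def_Vp_Cp}, and the downdate argument from the previous paragraph, with the roles of original and modified matrices swapped, then gives $P_t=\Pt{t}-\Vp{t}\Cppinv{t}\Vp{t}^T$ with $\Cp{t}\succeq 0$ and $\Vp{t}^T\in\range{\Cp{t}}$. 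Rearranging produces the update claim.

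The main obstacle I anticipate is propagating the range-space condition across every step, since the pseudo-inverse manipulations used in~\eqref{eq:upd:PeqPseudoSchur} and in passing from~\eqref{eq:upd:Pt_compact_dwn_tm_before_add_rem} to~\eqref{eq:upd:Mt_written_out_sub_iter} rely on it; fortunately Lemma~\ref{lem:pseude_schur_quotient} outputs exactly this inclusion at the next level, so the chain does not break. A minor secondary point is to justify, in the update case, that the $\Gt{t+1}$ and $\Ht{t+1}$ in~\eqref{eq:upd:def_Vp_Cp} really coincide with the Riccati matrices obtained from $\Pt{t+1}$, so that the inductive hypothesis is applied to the correct object and no circular dependence arises.
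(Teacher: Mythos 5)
Your proposal is correct and follows essentially the same route as the paper: the paper's proof of Theorem~\ref{thm:upd:rank_k_modifications} is precisely an induction on the single-step propagation results of Section~\ref{subsec:upd:impact_on_subseq_iter}, which are themselves obtained by forming $\Mh{t+1}$, invoking Albert's criterion for positive semidefiniteness, and applying Lemma~\ref{lem:pseude_schur_quotient} with the block partition you describe. Your reduction of the update case to the downdate case with the roles of original and modified matrices swapped is exactly the paper's ``similar calculations'' and explains why the tilded matrices appear in~\eqref{eq:upd:def_Vp_Cp}.
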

\begin{proof}
From the derivations of~\eqref{eq:upd:Pt_written_out_sub_iter_dwndate} and~\eqref{eq:upd:Pt_written_out_sub_iter_update} it follows that a modification of at most rank~$\kprev$ of $P_{t+1}$ at an arbitrary $t \in \intset{0}{N-1}$ in either of the forms~\eqref{eq:upd:Pt_compact_dwn_tm_before_add_rem} or~\eqref{eq:upd:Pt_compact_upd_tm_before_add_rem} results in a similar modification of at most rank~$\kprev$ of $P_t$. Since $P_{\tm}$ is modified as in~\eqref{eq:upd:Ptm_thm}, the proof follows by induction.
\end{proof}

The modified $\Kt{t+1}$ can be computed by solving
\begin{equation}
\Gt{t+1} \Kt{t+1} = - \Ht{t+1}^T. \label{eq:upd:GtKt_eq_Ht_subseq}
\end{equation}
For the common case where $\Gt{t+1} \in \posdefmats^{\nwt{t}}$, it is possible to use the Sherman-Morrison-Woodbury formula for efficient computations. For the details the reader is referred to, \eg{},~\cite{golub1996matrix}.
\begin{remark}
\label{rem:upd:Gt_fac_update}
The factorization of $\Gt{t+1}$ is computed by modifying the factorization of $G_{t+1}$. Hence, a solution to~\eqref{eq:upd:GtKt_eq_Ht_subseq} can be computed without having to re-factorize $\Gt{t+1}$, which requires less computations than re-solving~\eqref{eq:upd:GtKt_eq_Ht_subseq} from scratch.
\end{remark}


\subsection{Removing control input constraints from the working set}
\label{subsec:upd:remove_control_input_constr}
Assume that $P_{t+1}$ is modified as in~\eqref{eq:upd:Pt_compact_dwn_tm_before_add_rem} with a modification of dimension~$\kprev$. Furthermore, assume that $\kmod$ control input constraints that are affecting the control input at time~$t$ are removed from the working set, \ie, temporarily disregarding these constraints that previously were forced to hold. 
This affects the \uftoc problem~\eqref{eq:upd:uftoc_prob} in the same way as adding $\kmod$ new control inputs.
Note that this combination of modifications is more general than the one used in~\cite{nielsen13low_rank_updates}, where only modifications of the working set at a single time index was considered. Assume without loss of generality that the new control inputs are appended at the end of $\ufree{t}$. Then the matrices $\Bw{t}$, $\Qw{t}$ and $\Qxw{t}$ are modified as
\begin{equation}
\Bwt{t} \! \triangleq \! \begin{bmatrix}
\Bw{t}\! & \! \! b
\end{bmatrix} \!, \; \Qwt{t} \! \triangleq \! \begin{bmatrix}
\Qw{t} \!&\! \! \qw \\ \qw^T \! & \! \! \qwo
\end{bmatrix}\!, \; \Qxwt{t} \! \triangleq \! \begin{bmatrix}
\Qxw{t} \! & \!\!  \qxw
\end{bmatrix}\!,
\end{equation}
giving $\Bwt{t} \in \realnums{\nx \times (\nwt{t} + \kmod)}$, $\Qwt{t} \in \possemidefmats^{\nwt{t} + \kmod}$ and $\Qxwt{t} \in \realnums{\nx \times (\nwt{t} + \kmod)}$. From lines~\ref{alg:upd:ric_fac:line:G_comp}-\ref{alg:upd:ric_fac:line:H_comp} in Algorithm~\ref{alg:upd:ric_fac} it follows that $\Gt{t+1} \in \possemidefmats^{\nwt{t} + \kmod}$ and $\Ht{t+1} \in \realnums{\nx \times (\nwt{t} + \kmod)}$ are given by
\begin{subequations}
\label{eq:upd:Gt_Ht_dwndate}
\begin{align}
\Gt{t+1} = &\begin{bmatrix}
G_{t+1} & \g \\ \g^T & \go
\end{bmatrix} -  \Bwt{t}^T \Vm{t+1}
 {\Cmpinv{t+1}}  \Vm{t+1}^T \Bwt{t}, \label{eq:upd:Gtilde_dwndate}\\
 %
\Ht{t+1} =& \begin{bmatrix}
H_{t+1} & \h
\end{bmatrix} - A_t^T \Vm{t+1} {\Cmpinv{t+1}} \Vm{t+1}^T \Bwt{t},  \label{eq:upd:Htilde_dwndate}
\end{align}
\end{subequations}
where
\begin{equation}
\begin{bmatrix}
\h^T \! &\! \g^T & \! \go
\end{bmatrix}^T \! \triangleq\! \begin{bmatrix}
\qxw^T \! & \! \qw^T & \! \qwo
\end{bmatrix}^T \! + \begin{bmatrix}
A_t & \Bw{t} & b
\end{bmatrix}^T \! P_{t+1}b.
\end{equation}
In analogy with Section~\ref{subsec:upd:impact_on_subseq_iter}, $\Pt{t}$ can be computed as $\Pt{t} = \pschur{\Mt{t+1}}{\Gt{t+1}}$, where $\Mt{t+1}$ is computed as in~\eqref{eq:upd:Mt_written_out_sub_iter} but instead using $\Gt{t+1}$ and $\Ht{t+1}$ from~\eqref{eq:upd:Gt_Ht_dwndate}. By defining
\begin{equation}
\Mh{t+1} \! \triangleq \! \left[\begin{array}{c|ccc}
F_{t+1} & H_{t+1} & \h & A_t^T \Vm{t+1} \\
& & \\[\dimexpr-\normalbaselineskip+2pt] \hline
& & \\[\dimexpr-\normalbaselineskip+3pt]
H_{t+1}^T & G_{t+1} & \g & \Bw{t}^T\Vm{t+1} \\ \h^T & \g^T & \go & b^T \Vm{t+1} \\ \Vm{t+1}^T A_t & \Vm{t+1}^T \Bw{t} & \Vm{t+1}^T b & \Cm{t+1}
\end{array} \right], \label{eq:upd:Mhat_dwn}
\end{equation}
with $\Mb{t+1}$ as the second diagonal block, it is clear that $\Mt{t+1} = \pschur{\Mh{t+1}}{\Cm{t+1}}$ and $\Gt{t+1} = \pschur{\Mb{t+1}}{\Cm{t+1}}$. Hence, as in Section~\ref{subsec:upd:impact_on_subseq_iter}, Lemma~\ref{lem:pseude_schur_quotient} states that $\Pt{t} = \pschur{\Mh{t+1}}{\Mb{t+1}}$. Using the partitioning $A=F_{t+1}$ and $D=G_{t+1}$ ($B$, $C$, $E$ and $F$ consistently) of $\Mh{t+1}$, the second equality in Lemma~\ref{lem:pseude_schur_quotient} gives
\begin{equation}
\begin{split}
&\Pt{t} = \underbrace{F_{t+1} - H_{t+1}\pinv{G_{t+1}}H_{t+1}^T}_{P_{t}} - \Vm{t} {\Cmpinv{t}}\Vm{t}^T \in \possemidefmats^{\nx}, \label{eq:upd:Pt_compact_dwndate_tm}
\end{split}
\end{equation}

\noindent where 
\begin{subequations}
\begin{align}
\!\Vm{t} &\triangleq \! \begin{bmatrix}
\h \!- \!H_{t+1}\pinv{G_{t+1}}\g \!& \! \parens{A_t^T \! \! -\! H_{t+1} \pinv{G_{t+1}} \Bw{t}^T} \! \Vm{t+1}
\end{bmatrix} \! \!\label{eq:upd:def_Vt_dwn}\\
\!\Cm{t} &\!\triangleq \!\begin{bmatrix}
\go \! \! & \!\! b^T \Vm{t+1} \\
\Vm{t+1}^T b \! \! & \! \! \Cm{t+1}
\end{bmatrix}\! -\! \begin{bmatrix}
\g^T \\ \Vm{t+1}^T \Bw{t}
\end{bmatrix} \! \pinv{G_{t+1}} \!\begin{bmatrix}
\g^T \\ \Vm{t+1}^T \Bw{t}
\end{bmatrix}^T\!\!\!\!, \label{eq:upd:def_Ct_dwn} \\
\Cm{t} &\in \possemidefmats^{\kmod + \kprev}, \quad  \Vm{t}^T \in \range{\Cm{t}}.
\end{align}
\end{subequations}
Hence, removing $\kmod$ control input constraints at time $t$ from the working set when a modification in the form~\eqref{eq:upd:Pt_compact_dwn_tm_before_add_rem} of $P_{t+1}$ is already present results in a modification of $P_{t}$ in the same form as~\eqref{eq:upd:Pt_compact_dwn_tm_before_add_rem} but of increased dimension $\kprev + \kmod$. 
The modified version ${\Kt{t+1} \in \realnums{(\nwt{t} + \kmod)\times \nx}}$ can be computed by solving~\eqref{eq:upd:GtKt_eq_Ht_subseq} but using $\Gt{t+1}$ and $\Ht{t+1}$ from~\eqref{eq:upd:Gt_Ht_dwndate} instead of~\eqref{eq:upd:def_Ft_Gt_Ht_subseq_dwn}.

%

\begin{remark}
\label{rem:upd:mod_larger_than_nx}
Note that if $\kprev+\kmod $ is close to, or larger than, $\nx$ it might be better to re-compute the factorization. This trade-off can be investigated off-line by modifying the factorization for different sizes of modifications and determine which alternative is faster, but the details are left as future work.
\end{remark}

\begin{remark}
\label{rem:upd:rem_constr_no_mod_of_Ptp1}
If there is no modification of $P_{t+1}$, then $\Cm{t} \triangleq \go - \g^T \pinv{G_{t+1}} \g \in \possemidefmats^{\kmod}$ and $\Vm{t}\triangleq \h-H_{t+1}\pinv{G_{t+1}}\g \in \realnums{\nx \times \kmod}$.
\end{remark}

%
%
%
%
\begin{remark}
\label{rem:upd:sherman_morrison}
For the common case when $\Gt{t+1} \in \posdefmats^{\nwt{t}+\kmod}$, low-rank modifications can be exploited by using the Sherman-Morrison-Woodbury formula for efficient computations. The factorization of $\Gt{t+1}$ is modified as is mentioned in Remark~\ref{rem:upd:Gt_fac_update}.
\end{remark}


When removing $\kmod$ constraints from the working set also $\kmod$ components of $\ufixed{t}$ are removed. Hence, also straightforward modifications of $\Bv{t}$, $\Qxv{t}$, $\Qv{t}$, $\Qwv{t}$, $\bar{l}_{v,t}$ are made. However, these changes do not affect the matrices in the factorization and are not presented here, see~\cite{nielsen15:licthesis}.


\subsection{Adding control input constraints to the working set}
\label{subsec:upd:add_control_input_constr}

Assume that $P_{t+1}$ is modified as in the form~\eqref{eq:upd:Pt_compact_upd_tm_before_add_rem}, and that $\kmod$ control input constraints that are affecting the control input at time~$t$ are added to the working set at AS iteration $j$. Adding constraints corresponds to removing these control inputs from the problem and treating them as constants. The impact from this modification on $P_t$ is similar to when constraints are removed. Assume, without loss of generality, that the $\kmod$ control inputs are removed from the $\kmod$ last entries of $\ufree{t}$. The modified matrices $\Bwt{t}$, $\Qwt{t}$ and $\Qxwt{t}$ are then obtained from
\begin{equation}
\Bw{t} \! = \! \begin{bmatrix}
\Bwt{t}\! &\! \!b
\end{bmatrix}\!, \; \Qw{t}\! =\! \begin{bmatrix}
\Qwt{t}\! & \!\!\qw \\ \qw^T \! &\!\! \qwo
\end{bmatrix}\!, \; \Qxw{t} \! = \!\begin{bmatrix}
\Qxwt{t} \! & \!\!  \qxw
\end{bmatrix}.
\end{equation}
The implicit relations between $\Ft{t+1}$, $\Gt{t+1}$, $\Ht{t+1}$, $F_{t+1}$, $G_{t+1}$ and $H_{t+1}$ are therefore given by
\begin{subequations}
\label{eq:upd:add_constr_def_tilde_matrices}
\begin{align}
F_{t+1} &= \Ft{t+1}  - A_t^T \Vp{t+1}{\Cppinv{t+1}}\Vp{t+1}^T, \\
 \!G_{t+1} &  =\! \!  \begin{bmatrix}
\Gt{t+1} \! & \! \gt \\ \gt^T \! & \! \got 
\end{bmatrix} \!- \! \begin{bmatrix}
\Bwt{t}^T \Vp{t+1} \\ b^T \Vp{t+1}
\end{bmatrix} \! {\Cppinv{t+1}} \! \begin{bmatrix}
\Bwt{t}^T \Vp{t+1} \\ b^T \Vp{t+1}
\end{bmatrix}^T \! \! \! \!  , \label{eq:upd:def_Gtilde_upd} \\
 H_{t+1} &= \begin{bmatrix}
\Ht{t+1} & \htt 
\end{bmatrix} - A_t^T \Vp{t+1} {\Cppinv{t+1}} \begin{bmatrix}
\Bwt{t}^T \Vp{t+1} \\ b^T \Vp{t+1}
\end{bmatrix}^T. \label{eq:upd:def_Htilde_upd}
\end{align}
\end{subequations}
$\gt$, $\got$ and $\htt$ are computed from $\g$, $\go$ and $\h$ in $G_{t+1}$ and $H_{t+1}$.
Note that the modified matrices are on the right hand side.

Here, $\Mh{t+1}$ and $\Mb{t+1}$ are defined analogously as in~\eqref{eq:upd:Mhat_dwn}, but using the matrices in~\eqref{eq:upd:add_constr_def_tilde_matrices}. Hence, from Lemma~\ref{lem:pseude_schur_quotient}
\begin{align}
&P_t = \pschur{\Mh{t+1}}{\Mb{t+1}} = \underbrace{\Ft{t+1} - \Ht{t+1} \pinv{\Gt{t+1}} \Ht{t+1}^T}_{\Pt{t}} - \Vp{t}{\Cppinv{t}}\Vp{t}^T \notag \\[-5pt]
&\iff \Pt{t} = P_t + \Vp{t}{\Cppinv{t}}\Vp{t}^T \in \possemidefmats^{\nx}, \label{eq:upd:Pt_written_out_update_tm}
\end{align}
where
\begin{subequations}
\begin{align}
\!\Vp{t} &\triangleq \! \begin{bmatrix}
\htt\!-\!\Ht{t+1}\pinv{\Gt{t+1}}\gt &   \parens{A_t^T \!\!-\! \Ht{t+1} \pinv{\Gt{t+1}} \Bwt{t}^T} \! \Vp{t+1}
\end{bmatrix} \!\! \label{eq:upd:def_Vt_upd} \\
\!\Cp{t} \!&\triangleq \!\begin{bmatrix}
\got \! \! & \!\! b^T \Vp{t+1} \\
\Vp{t+1}^T b \! \! & \! \! \Cp{t+1}
\end{bmatrix}\! -\! \begin{bmatrix}
\gt^T \\ \Vp{t+1}^T \Bwt{t}
\end{bmatrix} \! \pinv{\Gt{t+1}} \!\begin{bmatrix}
\gt^T \\ \Vp{t+1}^T \Bwt{t}
\end{bmatrix}^T\!\!\!\!, \label{eq:upd:def_Ct_upd} \\
\Cp{t} &\in \possemidefmats^{\kmod + \kprev}, \quad  \Vp{t}^T \in \range{\Cp{t}}.
\end{align}
\end{subequations}
Hence, adding $\kmod$ control input constraints at time $t$ to the working set when a modification in the form~\eqref{eq:upd:Pt_compact_upd_tm_before_add_rem} is already present results in a modification of $P_{t}$ in the same form as~\eqref{eq:upd:Pt_compact_upd_tm_before_add_rem} but of increased dimension $\kprev + \kmod$. The modified $\Kt{t+1}$ can be computed by solving~\eqref{eq:upd:GtKt_eq_Ht_subseq}, but using the modified matrices in~\eqref{eq:upd:add_constr_def_tilde_matrices}. Note that Remark~\ref{rem:upd:mod_larger_than_nx} and~\ref{rem:upd:sherman_morrison} apply here as well.
\begin{remark}
\label{rem:upd:add_constr_no_mod_of_Ptp1}
If there is no modification of $P_{t+1}$, then $\Cp{t} \triangleq \go - \g^T \pinv{\Gt{t+1}} \g \in \possemidefmats^{\kmod}$ and $\Vp{t}\triangleq \h-\Ht{t+1}\pinv{\Gt{t+1}}\g \in \realnums{\nx \times \kmod}$.
\end{remark}

\subsection{Algorithms for modifying the Riccati factorization}
\label{subsec:upd:algorithms}
%
Let $\tm$ be the largest time index where $\wset{j}$ is modified. 
The theory presented in this section is summarized in Algorithm~\ref{alg:upd:ric_fac_modification}, which starts by modifying the matrices in the factorization according to Section~\ref{subsec:upd:remove_control_input_constr} or Section~\ref{subsec:upd:add_control_input_constr} depending on whether constraints are removed or added to the working set, respectively. Since $P_{\tm+1}$ is not modified Remark~\ref{rem:upd:rem_constr_no_mod_of_Ptp1} or Remark~\ref{rem:upd:add_constr_no_mod_of_Ptp1}, respectively, applies.  For $t < \tm$ the matrices in the factorization are modified as in Section~\ref{subsec:upd:impact_on_subseq_iter},~\ref{subsec:upd:remove_control_input_constr} or~\ref{subsec:upd:add_control_input_constr} depending on the type of modification at time $t$. Note that only adding \emph{or} removing constraints is possible at the same \AS iteration. As is mentioned in Remark~\ref{rem:upd:Gt_fac_update} standard methods for modifying the factorization of $G_{t+1}$ should be used to avoid re-computing the factorization. See for example~\cite{golub1996matrix,stewart1998:matrix,nocedal06:num_opt} for details on techniques for modifying factorizations.

For an example with non-singular $\Qu{t}$ where $\kmod$ constraints are removed at time $\tm$ and Cholesky factorizations of $G_{t+1}$ are used, the computational complexity when modifying the Riccati factorization instead of re-computing it is reduced from approximately $\Ordo{N(\nw^3+\nx^3+\nw^2\nx+\nx^2\nw)}$ to approximately $\Ordo{\tm(\nw^2\nx + \nw^2 + k\nw \nx+ k \nx^2)}$. 
If the solution to~\eqref{eq:upd:GtKt_eq_Ht_subseq} is computed using the Sherman-Morrison-Woodbury formula the complexity is further reduced to approximately $\Ordo{\tm(\nw^2 + k \nw^2 + k \nw \nx + k \nx^2)}$. 
Note that the complexity is now linear in $\tm$ and quadratic in $\nx$ and $\nw$, which shows the gains of modifying the Riccati factorization instead of re-computing it. However, the exact expression for the complexity depends on for example the choice of factorization and modification techniques in Algorithm~\ref{alg:upd:ric_fac} and~\ref{alg:upd:ric_fac_modification}.

\begin{algorithm}[h!]
  \caption{Modification of the Riccati factorization}
  \label{alg:upd:ric_fac_modification}
  \begin{algorithmic}[1]
  		\STATE Set $\tm$ as the largest $t$ for which $\wset{j}$ is modified
  		\IF{Constraints are removed at time $\tm$}
  			\STATE Modify factorization as in Section~\ref{subsec:upd:remove_control_input_constr} using Remark~\ref{rem:upd:rem_constr_no_mod_of_Ptp1} 
  		\ELSIF{Constraints are added at time $\tm$}
  			\STATE Modify factorization as in Section~\ref{subsec:upd:add_control_input_constr} using Remark~\ref{rem:upd:add_constr_no_mod_of_Ptp1}
  		\ENDIF
  	\FOR{$t=\tm-1,\ldots,0$}
		\IF{No modification of $\wset{j}$ at time $t$}
			\STATE Modify factorization as in Section~\ref{subsec:upd:impact_on_subseq_iter}
		\ELSIF{Constraints are removed at time $t$}
				\STATE Modify factorization as in Section~\ref{subsec:upd:remove_control_input_constr}
       		\ELSIF{Constraints are added at time $t$}
       			\STATE Modify factorization as in Section~\ref{subsec:upd:add_control_input_constr}
       \ENDIF
	\ENDFOR
	\end{algorithmic}
\end{algorithm}

To compute the solution to the modified \uftoc problem, the recursions in algorithms~\ref{alg:upd:ric_rec_bw_rec}-\ref{alg:upd:dual_var_fw_rec} need to be re-computed. Since the factorization is only modified for $t \in \intset{0}{\tm}$ the backward recursion in Algorithm~\ref{alg:upd:ric_rec_bw_rec} only needs to be re-computed for $t \leq \tm$ using the modified matrices $\Bwt{t}$, $\Gt{t}$, $\Ht{t}$ and $\Pt{t}$.




\section{Extension to General Constraints}
\label{sec:upd:general_constr}

 The \cftoc{} problem arising in many \MPC problems in industry often includes constraints on the states and the possibility to control only certain states or a linear combination of states~\cite{maciejowski2002predictive}, and is of a more general type of problem than~\eqref{eq:upd:primal_problem}. Here it will be described how the theory presented in Section~\ref{sec:upd:lr_modifications} can be used to compute the search directions even when solving more general \cftoc problems than~\eqref{eq:upd:primal_problem} using an \AS type solver. Note that the purpose with this section is not to present a complete \AS solver, but to explain how the theory can be used when solving more general problems than~\eqref{eq:upd:primal_problem}. In this section, the superscripts ``$p$'' and ``$d$'' denote variables related to the primal problem and the dual problem, respectively.


\subsection{Primal and dual \cftoc problems}
\label{subsec:upd:primal_and_dual_problem}

Consider a \cftoc problem with states $x_t^p \in \realnums{\nx}$, controlled variables $\ep{t} \in \realnums{\nee}$ and control inputs $u_t^p\in \realnums{\nuut{t}}$, and with inequality constraints on both states and control inputs. This general type of \cftoc problems covers many linear \MPC applications, and is given by the optimization problem
 \begin{equation}
 \minimizes{&\sum_{t=0}^{N-1} \Big( \frac{1}{2} \begin{bmatrix}
 \ep{t} \\ \up{t}
\end{bmatrix}^T \Qpartp{t}  \begin{bmatrix}
\ep{t} \\ \up{t}
\end{bmatrix} + \begin{bmatrix}
\linep{t} \\ \linup{t}
\end{bmatrix}^T  \begin{bmatrix}
\ep{t} \\ \up{t}
\end{bmatrix} + \cpartp{t} \Big)+ \\ & \frac{1}{2}\epT{N} \Qep{N} \ep{N} + \linepT{N} \ep{N} + \cpartp{N}  }{\timestack{\xp{}}, \timestack{\ep{}},\timestack{\up{}}}{&\xp{0} = \xinit \\ &\xp{t+1} = \Ap{t}\xp{t} +\Bp{t}\up{t} + \ap{t}, \; t \in \intset{0}{N-1} \\ &\ep{t} = M_t^p \xp{t}, \; t \in \intset{0}{N} \\ &\ineqx{t}^p \xp{t} + \inequ{t}^p\up{t} + \ineqaff{t}^p \preceq 0, \; t \in \intset{0}{N-1} \\ &\ineqx{N}^p\xp{N} + \ineqaff{N}^p \preceq 0,  } \label{eq:upd:cftoc_state_constr_primal}
 \end{equation}
 where $\ineqx{t}^p \in \realnums{\nc{t} \times \nx}$, $\inequ{t}^p \in \realnums{\nc{t} \times \nuut{t}}$ and $\ineqaff{t}^p \in \realnums{\nc{t}}$ defines the $\nc{t}$  inequality constraints at time $t$, and 
$\Qpart{t} \in \posdefmats^{\nee + \nuut{t}}$. 
Furthermore, let $\alpha_{t} \in \realnums{\nx}$, $\beta_t \in \realnums{\nee}$ and $\gamma_t \in \realnums{\nc{t}}$ (for all $t \in \intset{0}{N}$) be the dual variables for the dynamics constraints, the constraints $-\ep{t} + M_t^p \xp{t} = 0$, and the inequality constraints in~\eqref{eq:upd:cftoc_state_constr_primal}, respectively.

It is shown in~\cite{axehill06:_mixed_integ_dual_quadr_progr_tail_mpc,nielsen15:licthesis} that the dual problem to~\eqref{eq:upd:cftoc_state_constr_primal} can also be interpreted as a \cftoc problem with the structure
\begin{equation}
\minimizes{&\sum_{\tau = 0}^{\Nd-1}\parens{ \frac{1}{2} \begin{bmatrix}
\xd{\tau} \\ \ud{\tau}
\end{bmatrix}^T \Qpartd{\tau} \begin{bmatrix}
\xd{\tau} \\ \ud{\tau}
\end{bmatrix} +  \begin{bmatrix}
\linxd{\tau} \\ \linud{\tau}
\end{bmatrix}^T \begin{bmatrix}
\xd{\tau} \\ \ud{\tau}
\end{bmatrix} + \cdual{\tau} } + \\&   \linxdT{\Nd} \xd{\Nd} }{\timestack{\xd{},\ud{}}}{&\xd{0} = 0 \\ &\xd{\tau + 1} = \Ad{\tau} \xd{\tau} + \Bd{\tau} \ud{\tau}, \; \tau \in \intset{0}{\Nd-1} \\ &\begin{bmatrix}
0 & -I_{\nc{\Nd-1-\tau}}
\end{bmatrix} \ud{\tau} \preceq 0, \; \tau \in \intset{0}{\Nd-1} ,}\label{eq:upd:cftoc_state_constr_dual}
\end{equation}
where $\tau \triangleq N-t$, $\Nd \triangleq N+1$, the state variables $\xd{\tau} \in \realnums{\nx}$ and control inputs $\ud{\tau} \in \realnums{\nee+\nc{N-\tau}}$ are introduced as
\begin{equation}
\xd{\tau} \!\triangleq \! \alpha_{N+1-\tau}, \, \tau \! \in\! \intset{1}{N+1}, \; \;
\ud{\tau} \triangleq \begin{bmatrix}
\beta_{N-\tau} \\ \gamma_{N-\tau}
\end{bmatrix}, \, \tau \!\in\! \intset{0}{N}, \label{eq:upd:def_xd_ud}
\end{equation}
and the quadratic terms in the objective function satisfy
\begin{equation}
\Qpartd{\tau} \! \in  \possemidefmats^{\nx + \nee + \nc{\Nd-1-\tau}}, \; \tau \!\in \intset{0}{\Nd-1}, \; \Qxd{\Nd} \in \possemidefmats^{\nx}.
\end{equation}
Note that there are no state constraints in the dual problem~\eqref{eq:upd:cftoc_state_constr_dual} despite that~\eqref{eq:upd:cftoc_state_constr_primal} has it, and that $\Qpartd{\tau}$ is positive semidefinite. 
%


Once the dual problem has been solved, the primal variables can be computed from the dual solution using the equations
\begin{subequations}
\label{eq:upd:relation_primal_dual_sol}
\begin{align}
\xp{t} &= - \lambda_{\Nd-t}, \; t \in \intset{0}{N}, \label{eq:upd:xp_lamda_relation}  \\
\up{t} &= - \QeubpT{t}\linep{t} - \Qubp{t} \linup{t} -  \Qubp{t} \BpT{t} \xd{N-t} + \notag \\ &  \begin{bmatrix}
\QeubpT{t} & - \Qubp{t} \parens{\inequ{t}^p}^T
\end{bmatrix}\ud{N-t}, \; t \in \intset{0}{N-1},\label{eq:upd:up_from_dual}
\end{align}
\end{subequations}
where $\lambda_\tau$ are the dual variables corresponding to the equality constraints in the dual problem~\eqref{eq:upd:cftoc_state_constr_dual}, and
\begin{equation}
\begin{bmatrix}
\Qebp{t} & \Qeubp{t} \\ \QeubpT{t} & \Qubp{t}
\end{bmatrix} \! \triangleq \! \inv{\begin{bmatrix}
\Qep{t} & \Qeup{t} \\ \QeupT{t} & \Qup{t}
\end{bmatrix}} \!\! \!\!  =  \inv{(\Qpartp{t})}. \label{eq:upd:def_Qb}
\end{equation}

\subsection{Computing the search direction in the dual}
\label{subsec:method_gen_constr}

\vspace{-2pt}

One possibility to handle state constraints is to solve the primal problem~\eqref{eq:upd:cftoc_state_constr_primal} using for example a dual \AS type solver as proposed in~\cite{axehill06:_mixed_integ_dual_quadr_progr_tail_mpc}, or a dual gradient projection method as in~\cite{axehill08:thesis,axehill08:_dual_gradien_projec_quadr_progr}. In these types of methods, the primal problem~\eqref{eq:upd:cftoc_state_constr_primal} is solved by computing the solution to the corresponding dual problem~\eqref{eq:upd:cftoc_state_constr_dual} using primal methods. The dual \cftoc problem~\eqref{eq:upd:cftoc_state_constr_dual} is in the same form as the \cftoc problem~\eqref{eq:upd:primal_problem} which has only simple bounds on the control input. Hence, it is solved by computing a sequence of search directions corresponding to the solutions of \uftoc problems in the form~\eqref{eq:upd:uftoc_prob}. If an \AS type solver employing Riccati recursions is used, the theory presented in this paper directly applies. The primal solution to~\eqref{eq:upd:cftoc_state_constr_primal} is obtained from~\eqref{eq:upd:relation_primal_dual_sol}.


However, when a dual solver is used to solve~\eqref{eq:upd:cftoc_state_constr_primal}, primal feasibility is obtained only at the optimum~\cite{goldfarb83:numerical_stable_dual_method_for_solving_QP,bartlett06:QPSchur}. In a real-time \MPC control loop this might be problematic since the computed control input is not necessarily primal feasible due to early termination to satisfy real-time constraints.
An approach to address this problem and still be able to perform low-rank modifications of the Riccati factorization with state-constraints present is presented here. The idea is to use a primal solver which maintains primal feasibility, but that computes the search direction by solving a dual \uftoc problem.  This can be done by exploiting the relation between the working sets and variables in the primal problem~\eqref{eq:upd:cftoc_state_constr_primal} and in the dual problem~\eqref{eq:upd:cftoc_state_constr_dual}, respectively. 

To do this, let $\ineqx{i,t}^p$ denote the $i$:th row of $\ineqx{t}^p$, and let the notation $(i,t) \in \wset{j}$ indicate that the inequality constraint
 \begin{equation}
 \ineqx{i,t}^p \xp{t} + \inequ{i,t}^p \up{t} + \ineqaff{i,t}^p \leq 0,
 \end{equation}
 \vspace{-15pt}
 
 \noindent is in the working set and is thus forced to hold with equality. The primal search direction at \AS iteration $j$ is computed by solving the equality constrained problem (in compact notation)
 \begin{equation}
 \minimizes{J^p(\timestack{\xp{}},\timestack{\ep{}},\timestack{\up{}})}{\timestack{\xp{}},\timestack{\ep{}},\timestack{\up{}}}{&\timestack{A^p}\timestack{\xp{}}+\timestack{B^p}\timestack{\up{}}=\timestack{a^p} \\ &\timestack{\ep{}} = \timestack{M^p}\timestack{\xp{}} \\ &\ineqx{i,t}^p \xp{t} \! + \! \inequ{i,t}^p \up{t} + \ineqaff{i,t}^p = 0, \; (i,t) \!\in \! \wset{j},} \label{eq:upd:primal_eg_constr_QP}
 \end{equation}
 
 \noindent where $J^p$ is the objective function in~\eqref{eq:upd:cftoc_state_constr_primal} and the two first equality constraints are the equality constraints in~\eqref{eq:upd:cftoc_state_constr_primal} presented in compact notation. Note that $\gamma_{i,t}$ for $(i,t) \in \wset{j}$ are unconstrained, and $\gamma_{i,t} = 0$ for all $(i,t) \in \wsetc{j}$,~\cite{nocedal06:num_opt}.
Hence, from the definition of $\ud{\tau}$ in~\eqref{eq:upd:def_xd_ud} it follows that $\ud{\nee+i,N-t} = \gamma_{i,t}$ for all $(i,t) \in \wset{j}$ are unconstrained optimization variables in the dual problem, and $\ud{\nee + i,N-t}\! =\!\gamma_{i,t}\! =\!  0$ for all $(i,t) \!\in\! \wsetc{j}$. Hence, instead of solving~\eqref{eq:upd:primal_eg_constr_QP} directly, the solution can be {computed by solving the corresponding dual problem}
\begin{equation}
\minimizes{J^d(\timestack{\xd{}},\timestack{\ud{}})}{\timestack{\xd{}},\timestack{\ud{}}}{ &\timestack{A^d}\timestack{\xd{}} + \timestack{B^d}\timestack{\ud{}} = \timestack{a^d} \\ &\ud{\nee + i,N-t} = 0, \; (i,t) \in \wsetc{j}, } \label{eq:upd:dual_eq_constr_QP}
\end{equation}
 
 \noindent where $J^d$ is the objective function in~\eqref{eq:upd:cftoc_state_constr_dual} and the equality constraints in~\eqref{eq:upd:cftoc_state_constr_dual} are compactly written as the first constraint in~\eqref{eq:upd:dual_eq_constr_QP}. The primal solution  is obtained from~\eqref{eq:upd:relation_primal_dual_sol}. By eliminating the constrained dual control inputs,~\eqref{eq:upd:dual_eq_constr_QP} is in the same \uftoc form as~\eqref{eq:upd:uftoc_prob}.
Furthermore, removing a constraint from the working set in the primal problem corresponds to adding a constraint in the dual problem, \ie, constrain one dual control input, and vice versa. Hence, the structure of the modifications of the dual \uftoc problem between \AS iterations are the same as for the \uftoc problem~\eqref{eq:upd:uftoc_prob}, and the theory presented in Section~\ref{sec:upd:lr_modifications} can be used to modify the Riccati factorization when solving a sequence of problems in the form~\eqref{eq:upd:dual_eq_constr_QP}.

\vspace{-10pt}

\section{Numerical Results}
\label{sec:upd:numerical_results}
\vspace{-7pt}

In this section, the proposed algorithm for solving the \KKT system of~\eqref{eq:upd:uftoc_prob} by modifying the Riccati factorization is compared to the standard Riccati recursion. A proof-of-concept implementation is made in \matlab, where most of the main operations such as Cholesky factorizations have been implemented in m-code to get a fair comparison of the computational times. In this implementation the gaxpy Cholesky in~\cite{golub1996matrix} and the Cholesky modifications from~\cite{stewart1998:matrix} are used. 
The m-code is converted into \textsc{C} code by using \matlab's code generation framework, and the generated \textsc{C} code is used to produce the numerical results. As always, to get a completely fair comparison of the algorithms, fully optimized implementations in a compiled language should be used. However, this is outside the scope of this paper.

All computations were performed on an Intel Xeon W3565 @3.2 GHz processor running Linux (version 2.6.32-504.12.2.el6.x86\_64) and \matlab (version 9.1.0.441655 (R2016b)). The default settings have been used for the code generation in \matlab, with the exception that the compilation flag '-O3' has been used to optimize the code for speed.





The algorithms are compared by solving random \uftoc{} problems in the form~\eqref{eq:upd:uftoc_prob}, where $\nx$ and $\nw$ are logarithmically spaced in $\intset{10}{200}$. The computation times are averaged over 20 different problems of the same dimensions. In Fig.~\ref{fig:upd:nx_vs_nu_3d} the computation times are normalized w.r.t. to the maximum computation time $0.65$ seconds for the standard Riccati recursion. Here, a problem with $N\!=\!10$ has been solved after removing a constraint at either $\tm\!=\!0$ (modifying one step of the factorization) or {$\tm \! = \! N\!-\!1$ (modifying the full factorization)}, which are the best and worst case for the modifying algorithm, respectively. Furthermore, in Fig.~\ref{fig:upd:N_vs_tm} the performance gains for different $N$ and $\tm \in \{1,\frac{N}{4}, \frac{N}{2}, \frac{3N}{4},N \}$ are investigated by plotting the ratio between the computation times when modifying the Riccati factorization and re-computing it for problems of dimension $\nx,\nw\!=\!10$, $\nx,\nw\!=\!103$ and $\nx,\nw\!=\!200$, respectively. From the figures it is clear that modifying the Riccati factorization instead of re-computing it can significantly reduce the computation time for solving the \uftoc problem~\eqref{eq:upd:uftoc_prob}, especially for large problem sizes and/or when only a small part of the factorization is modified. The $\Ordo{\tm}$ complexity (independent on $N$) result in Section~\ref{subsec:upd:algorithms} is numerically verified in Fig.~\ref{fig:upd:N_vs_tm}, where it is shown that the performance is similar for $N \in \{10,20,40,60,80,100\}$. The accuracies of the numerical solutions have been measured as the Euclidean norm of the \KKT residual for the \uftoc problem~\eqref{eq:upd:uftoc_prob}. For a problem with $N=100$ and $\nx,\nw = 200$ the maximum residual norm is in the order $10^{-10}$ for both the standard Riccati recursion and the proposed algorithm.

\begin{figure}
\centering
\begin{minipage}{0.9\columnwidth}
\input{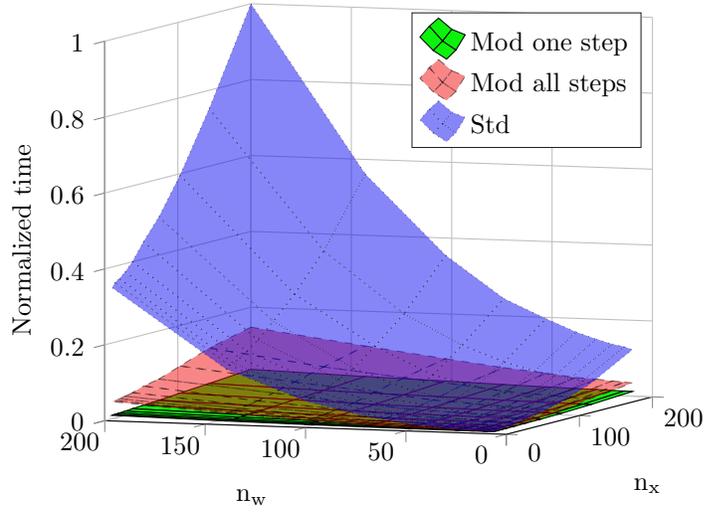}
\end{minipage}
\caption{Normalized (w.r.t. the maximum computation time) computation time for computing the Riccati recursion using the standard (Algorithm~\ref{alg:upd:ric_fac}) and modifying (Algorithm~\ref{alg:upd:ric_fac_modification}) algorithms, respectively, when one constraint is removed. Here the case where only one step (best case) and all steps (worst case) are modified are shown.}
\label{fig:upd:nx_vs_nu_3d}
\end{figure}

\vspace{-10pt}

\begin{figure}
\centering
\begin{minipage}{0.9\columnwidth}
\input{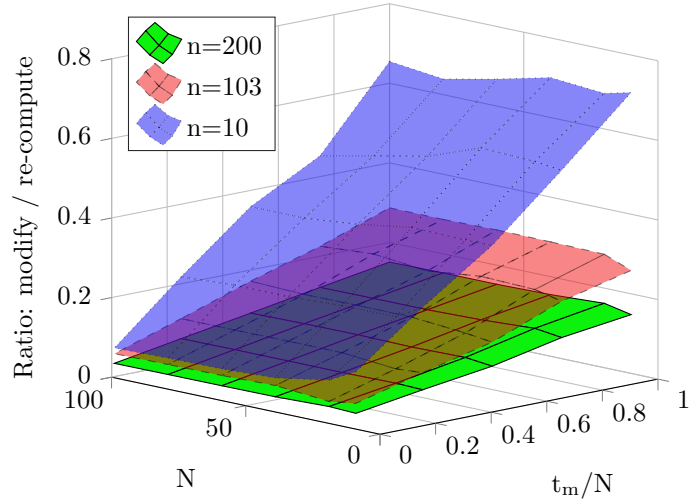}
\end{minipage}
\caption{Computation time ratio between computing the Riccati recursion using Algorithm~\ref{alg:upd:ric_fac_modification}  and using Algorithm~\ref{alg:upd:ric_fac} for problems with $\nx,\nw=200$, $\nx,\nw=103$ and $\nx,\nw=10$. }
\label{fig:upd:N_vs_tm}
\end{figure}

\section{Conclusions}
\label{sec:upd:conclusions}
This work presents theory and algorithms for modifying the Riccati factorization instead of re-computing it after low-rank modifications of the \KKT system have been made. This is possible by exploiting the special structure from the \MPC problem, and it can be used to significantly improve the performance of \AS type solvers by modifying the Riccati factorization between \AS iterations instead of re-computing it. The algorithm has been evaluated using a \textsc{C} implementation generated from \matlab's code generation framework, and it is shown that significant gains in terms of performance can be obtained using the proposed algorithm. The result shows that Riccati recursions can be employed in \AS methods without sacrificing the important possibility to exploit low-rank modifications of the \KKT systems when computing the search directions required to solve a \cftoc problem.


\newpage

\bibliography{IEEEfull,ianFull,axe_full}

\begin{thebibliography}{10}
\providecommand{\url}[1]{#1}
\csname url@samestyle\endcsname
\providecommand{\newblock}{\relax}
\providecommand{\bibinfo}[2]{#2}
\providecommand{\BIBentrySTDinterwordspacing}{\spaceskip=0pt\relax}
\providecommand{\BIBentryALTinterwordstretchfactor}{4}
\providecommand{\BIBentryALTinterwordspacing}{\spaceskip=\fontdimen2\font plus
\BIBentryALTinterwordstretchfactor\fontdimen3\font minus
  \fontdimen4\font\relax}
\providecommand{\BIBforeignlanguage}[2]{{%
\expandafter\ifx\csname l@#1\endcsname\relax
\typeout{** WARNING: IEEEtran.bst: No hyphenation pattern has been}%
\typeout{** loaded for the language `#1'. Using the pattern for}%
\typeout{** the default language instead.}%
\else
\language=\csname l@#1\endcsname
\fi
#2}}
\providecommand{\BIBdecl}{\relax}
\BIBdecl

\bibitem{maciejowski2002predictive}
J.~Maciejowski, \emph{Predictive control with constraints}.\hskip 1em plus
  0.5em minus 0.4em\relax Prentice Hall, 2002.

\bibitem{jonson83:thesis}
H.~Jonson, ``A {N}ewton method for solving non-linear optimal control problems
  with general constraints,'' Ph.D. dissertation, Link\"{o}pings Tekniska
  H\"{o}gskola, 1983.

\bibitem{rao98:_applic_inter_point_method_model_predic_contr}
C.~Rao, S.~Wright, and J.~Rawlings, ``Application of interior-point methods to
  model predictive control,'' \emph{Journal of Optimization Theory and
  Applications}, vol.~99, no.~3, pp. 723--757, Dec. 1998.

\bibitem{hansson00:_primal_dual_inter_point_method}
A.~Hansson, ``A primal-dual interior-point method for robust optimal control of
  linear discrete-time systems,'' \emph{{IEEE} Transactions on Automatic
  Control}, vol.~45, no.~9, pp. 1639--1655, Sep. 2000.

\bibitem{vandenberghe02:_robus_full}
L.~Vandenberghe, S.~Boyd, and M.~Nouralishahi, ``Robust linear programming and
  optimal control,'' Department of Electrical Engineering, University of
  California Los Angeles, Tech. Rep., 2002.

\bibitem{axehill06:_mixed_integ_dual_quadr_progr_tail_mpc}
D.~Axehill and A.~Hansson, ``A mixed integer dual quadratic programming
  algorithm tailored for {MPC},'' in \emph{Proceedings of the 45th {IEEE}
  Conference on Decision and Control}, San Diego, USA, Dec. 2006, pp.
  5693--5698.

\bibitem{axevanhan07:_relax_applic_mipc_compa_and_eff_compu}
D.~Axehill, A.~Hansson, and L.~Vandenberghe, ``Relaxations applicable to mixed
  integer predictive control -- comparisons and efficient computations,'' in
  \emph{Proceedings of the 46th {IEEE} Conference on Decision and Control}, New
  Orleans, USA, 2007, pp. 4103--4109.

\bibitem{axehill08:thesis}
D.~Axehill, ``Integer quadratic programming for control and communication,''
  Ph.D. dissertation, Link\"{o}ping University, 2008.

\bibitem{axehill08:_dual_gradien_projec_quadr_progr}
D.~Axehill and A.~Hansson, ``A dual gradient projection quadratic programming
  algorithm tailored for model predictive control,'' in \emph{Proceedings of
  the 47th {IEEE} Conference on Decision and Control}, Cancun, Mexico, 2008,
  pp. 3057--3064.

\bibitem{diehl09:_nonlin_model_predic_contr}
M.~Diehl, H.~Ferreau, and N.~Haverbeke, \emph{Nonlinear model predictive
  control}.\hskip 1em plus 0.5em minus 0.4em\relax Springer Berlin /
  Heidelberg, 2009, ch. Efficient Numerical Methods for Nonlinear MPC and
  Moving Horizon Estimation, pp. 391--417.

\bibitem{nielsen13low_rank_updates}
I.~Nielsen, D.~Ankelhed, and D.~Axehill, ``Low-rank modifications of {R}iccati
  factorizations with applications to model predictive control,'' in
  \emph{Proceedings of the 52nd {IEEE} Conference on Decision and Control},
  Firenze, Italy, Dec. 2013, pp. 3684--3690.

\bibitem{nielsen15:parallel_factorization_cdc}
I.~Nielsen and D.~Axehill, ``A parallel structure exploiting factorization
  algorithm with applications to model predictive control,'' in
  \emph{Proceedings of the 54th {IEEE} Conference on Decision and Control},
  Osaka, Japan, Dec. 2015, pp. 3932--3938.

\bibitem{frison16:thesis}
G.~Frison and J.~J{\o}rgensen, ``Algorithms and methods for high-performance
  model predictive control,'' Ph.D. dissertation, 2016.

\bibitem{kirches2011factorization}
C.~Kirches, H.~Bock, J.~Schl{\"o}der, and S.~Sager, ``A factorization with
  update procedures for a {KKT} matrix arising in direct optimal control,''
  \emph{Mathematical Programming Computation}, vol.~3, no.~4, pp. 319--348,
  2011.

\bibitem{nocedal06:num_opt}
J.~Nocedal and S.~Wright, \emph{Numerical optimization}.\hskip 1em plus 0.5em
  minus 0.4em\relax Springer-Verlag, 2006.

\bibitem{nielsen15:licthesis}
I.~Nielsen, \emph{On structure exploiting numerical algorithms for model
  predictive control}, ser. (Licentiate's thesis) Link\"{o}ping Studies in
  Science and Technology. Thesis, 2015, no. 1727.

\bibitem{boyd04:_convex_optim}
S.~Boyd and L.~Vandenberghe, \emph{Convex optimization}.\hskip 1em plus 0.5em
  minus 0.4em\relax Cambridge University Press, 2004.

\bibitem{albert69}
A.~Albert, ``Conditions for positive and nonnegative definiteness in terms of
  pseudoinverses,'' \emph{SIAM Journal on Applied Mathematics}, vol.~17, no.~2,
  pp. 434--440, 1969.

\bibitem{carlsonPseudo}
D.~Carlson, E.~Haynsworth, and T.~Markham, ``\BIBforeignlanguage{English}{A
  generalization of the {S}chur complement by means of the {M}oore-{P}enrose
  inverse},'' \emph{\BIBforeignlanguage{English}{SIAM Journal on Applied
  Mathematics}}, vol.~26, no.~1, pp. pp. 169--175, 1974.

\bibitem{zhang2005schur}
F.~Zhang, \emph{The Schur complement and its applications}.\hskip 1em plus
  0.5em minus 0.4em\relax Springer, 2005, vol.~4.

\bibitem{golub1996matrix}
G.~Golub and C.~Van~Loan, \emph{Matrix computations}, ser. Johns Hopkins
  Studies in the Mathematical Sciences.\hskip 1em plus 0.5em minus 0.4em\relax
  Johns Hopkins University Press, 1996.

\bibitem{stewart1998:matrix}
G.~Stewart, \emph{Matrix algorithms: Volume 1, basic decompositions}, ser.
  Matrix Algorithms.\hskip 1em plus 0.5em minus 0.4em\relax Society for
  Industrial and Applied Mathematics, 1998.

\bibitem{goldfarb83:numerical_stable_dual_method_for_solving_QP}
D.~Goldfarb and A.~Idnani, ``\BIBforeignlanguage{English}{A numerically stable
  dual method for solving strictly convex quadratic programs},''
  \emph{\BIBforeignlanguage{English}{Mathematical Programming}}, vol.~27,
  no.~1, pp. 1--33, 1983.

\bibitem{bartlett06:QPSchur}
R.~Bartlett and L.~Biegler, ``\BIBforeignlanguage{English}{Qpschur: A dual,
  active-set, schur-complement method for large-scale and structured convex
  quadratic programming},'' \emph{\BIBforeignlanguage{English}{Optimization and
  Engineering}}, vol.~7, no.~1, pp. 5--32, 2006.

\end{thebibliography}

\end{document}